\title{Exponentials of non-singular simplicial sets}
\author{Vegard Fjellbo}
\address{Department of Mathematics, University of Oslo, Norway}
\email{vegard.fjellbo@gmail.com}
\author{John Rognes}
\address{Department of Mathematics, University of Oslo, Norway}
\email{rognes@math.uio.no}
\subjclass[2010]{Primary
18D15, 
55U10. 
}
\date{May 16th 2021}
\newtheorem{theorem}{Theorem}[section]
\newtheorem{proposition}[theorem]{Proposition}
\newtheorem{lemma}[theorem]{Lemma}
\theoremstyle{definition}
\theoremstyle{remark}
\newtheorem{remark}[theorem]{Remark}
\numberwithin{equation}{section}
\newcommand{\Ex}{Ex}
\newcommand{\longto}{\longrightarrow}
\newcommand{\nsSet}{nsSet}
\newcommand{\Sd}{Sd}
\newcommand{\sSet}{sSet}
\renewcommand{\:}{\colon}
\renewcommand{\epsilon}{\varepsilon}
\begin{document}

\begin{abstract}
A simplicial set is non-singular if the representing map of each
non-degenerate simplex is degreewise injective.  The simplicial mapping
set $X^K$ has $n$-simplices given by the simplicial maps $\Delta[n]
\times K \to X$.  We prove that $X^K$ is non-singular whenever $X$
is non-singular.  It follows that non-singular simplicial sets form a
cartesian closed category with all limits and colimits, but it is not
a topos.
\end{abstract}

\maketitle

\section{Introduction}

Let $\sSet$ be the category of simplicial sets, and let $\nsSet$
denote its full subcategory of \emph{non-singular simplicial sets},
i.e., those $X$ such that for each non-degenerate simplex $x \in X_n$
the representing map $\bar x \: \Delta[n] \to X$ is degreewise injective.
The geometric realization $|X|$ of each non-singular simplicial set
admits a well-defined PL (piecewise-linear) structure, and the category
$\nsSet$ plays a key role in the passage between simplicial sets and
PL manifolds in the proof of the stable parametrized $h$-cobordism
theorem~\cite{WJR13}*{Thm.~0.1, \S3.4}.

The inclusion $U \: \nsSet \to \sSet$ admits a left adjoint $D \: \sSet
\to \nsSet$, called \emph{desingularization}, and the adjunction unit
$\eta_X \: X \to UDX$ is degreewise surjective.  The category $\nsSet$
has all (small) limits and colimits, which are preserved by~$U$ and~$D$,
respectively.  Let $(\Sd, \Ex)$ denote Kan's adjoint pair~\cite{Kan57} of
endofunctors of $\sSet$.  The first author has exhibited a model structure
on the category $\nsSet$, and furthermore shown that the adjunction
$$
D \Sd^2 \: \sSet \rightleftarrows \nsSet \: \Ex^2 U
$$
defines a Quillen equivalence from the standard model structure on
simplicial sets~\cite{Fje}.  The proofs of these two results depend on
knowing that the endofunctor $X \mapsto X \times \Delta[1]$ of $\nsSet$
preserves all colimits, and one purpose of the present paper is to
establish this fact.

For any simplicial sets $X$ and~$K$ let $X^K$ be the
\emph{simplicial mapping set}, with $n$-simplices the set of maps
$\Delta[n] \times K\to X$.  Our main result follows.

\begin{theorem} \label{thm:XKnonsing}
Let $X$ and $K$ be any two simplicial sets.  If $X$ is non-singular,
then so is $X^K$.
\end{theorem}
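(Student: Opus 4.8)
The plan is to pass through the exponential adjunction and reduce the non-singularity of $X^K$ to a collapsing property of prisms. An $n$-simplex of $X^K$ is a map $f\colon\Delta[n]\times K\to X$, and the representing map $\bar f\colon\Delta[n]\to X^K$ corresponds under $\mathrm{Hom}(\Delta[n],X^K)\cong\mathrm{Hom}(\Delta[n]\times K,X)$ to $f$ itself; writing $\alpha$ also for the induced map $\Delta[p]\to\Delta[n]$, the $p$-simplex $\alpha^*f$ corresponds to $f\circ(\alpha\times\mathrm{id}_K)$. Thus ``$X^K$ is non-singular'' unwinds to: for every non-degenerate $f$ and every pair $\alpha\ne\beta\colon[p]\to[n]$ one has $f\circ(\alpha\times\mathrm{id}_K)\ne f\circ(\beta\times\mathrm{id}_K)$. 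I would prove the contrapositive, namely that a coincidence $f\circ(\alpha\times\mathrm{id}_K)=f\circ(\beta\times\mathrm{id}_K)$ with $\alpha\ne\beta$ forces $f$ to be degenerate, i.e.\ $f=s_a d_a f$ for some $a$.

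First I would extract from such a coincidence a \emph{parallel-slice} equality. Restricting the equation to the constant simplices in the $\Delta[p]$-direction and choosing an index $i$ with $a:=\alpha(i)\ne\beta(i)=:b$, one sees that the two restrictions $f|_{\{a\}\times K}$ and $f|_{\{b\}\times K}\colon K\to X$ coincide. Next I would replace the (possibly far apart) pair $a<b$ by an adjacent one. Assuming $b>a+1$ and using that $X$ is non-singular, so that every non-degenerate simplex of $X$ has distinct vertices, the image under $f$ of the $2$-simplex spanned by $a<a+1<b$ together with a vertex $w$ of $K$ has its first and last vertices equal, hence is degenerate; both kinds of degenerate $2$-simplex force the middle vertex to coincide with them, giving $f(a,w)=f(a+1,w)$. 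Promoting this from vertices to all simplices of $K$ by induction over the skeleta of $K$ yields the adjacent slice equality $f|_{\{a\}\times K}=f|_{\{a+1\}\times K}$.

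The heart of the argument is then a prism lemma: if $X$ is non-singular and $f|_{\{a\}\times K}=f|_{\{a+1\}\times K}$, then $f=s_a d_a f$. I would prove this using the shuffle (staircase) decomposition of the prism $\Delta[1]\times K$ sitting along the edge $\{a,a+1\}\subset[n]$. Once the two slices are identified, each non-degenerate top-dimensional simplex of the prism is a path that acquires a pair of equal \emph{adjacent} vertices, so non-singularity of $X$ forces its image to be degenerate, and that degenerate simplex is pinned down by its diagonal face. Comparing these collapses with the corresponding degeneracies of $s_a d_a f$ shows that the two maps agree on a generating family of simplices, hence everywhere.

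The main obstacle is precisely this last combinatorial step: propagating the local, vertex-level collapse to an identity of maps out of the full product $\Delta[n]\times K$, and checking that the path-by-path degeneracies assemble coherently into the single operator $s_a$. To tame the bookkeeping I would first reduce to the representable case $K=\Delta[m]$. This is legitimate because $X^K=\lim_{\Delta[m]\to K}X^{\Delta[m]}$, and a limit of non-singular simplicial sets is again non-singular: a subobject of a non-singular simplicial set is non-singular (a non-degenerate simplex of the subobject stays non-degenerate and embedded in the ambient object), a product of non-singular simplicial sets is non-singular, and every limit is a subobject of a product. For $K=\Delta[m]$ the non-degenerate simplices of $\Delta[n]\times\Delta[m]$ are exactly the monomorphisms $[q]\to[n]\times[m]$, i.e.\ the staircase paths, which makes the shuffle analysis of the prism lemma fully explicit.
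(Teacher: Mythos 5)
Your overall architecture is genuinely different from the paper's, and parts of it are sound: the reduction to the representable case via $X^K \cong \lim_{\Delta[m]\to K} X^{\Delta[m]}$, together with the facts that subobjects and products of non-singular simplicial sets are non-singular, is legitimate and parallels the paper's own final step (the paper embeds $X^K$ into $\prod_\alpha X^{\Delta[n_\alpha]}$). But the paper never analyzes a general prism $\Delta[n]\times\Delta[m]$: it proves only the case $K=\Delta[1]$ (Proposition~\ref{prop:XD1nonsing}, via Lemmas~\ref{lem:Phiepskepsl} and~\ref{lem:PhiPsisigmak}), then observes that $X^{\Delta[1]^m}\cong(X^{\Delta[1]})^{\Delta[1]^{m-1}}$ is non-singular by iteration, and that the inclusion $\Delta[m]\to\Delta[1]^m$, $j\mapsto(1,\dots,1,0,\dots,0)$, admits a retraction, so $X^{\Delta[m]}$ embeds in $X^{\Delta[1]^m}$. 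This cube--retract trick is precisely the device that avoids the $\Delta[n]\times\Delta[m]$ shuffle bookkeeping you are proposing to carry out by hand; you could graft it onto your own reduction and then only the case $m=1$ of your prism lemma would be needed.

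As written, however, your plan has a genuine gap at the propagation step. From the degenerate-triangle argument you correctly get the vertex equalities $f(a,w)=f(a+1,w)$ for all vertices $w$ of $K$, but you then claim that ``induction over the skeleta of $K$'' upgrades this to the slice equality $f|_{\{a\}\times K}=f|_{\{a+1\}\times K}$. No such induction exists: two maps $K\to X$ into a non-singular $X$ may agree on all vertices, and even on the boundary of every simplex, and still differ --- take $X$ to be the non-singular simplicial set with two vertices and two distinct non-degenerate $1$-simplices joining them; the two edges give distinct maps $\Delta[1]\to X$ agreeing on vertices. What actually closes this step is another prism argument: for each simplex $\bar y\colon\Delta[m]\to K$, the restriction of $f$ to $\{a,a+1\}\times\Delta[m]$ pulled back along $\bar y$ is a homotopy that is constant on vertices, and the shuffle decomposition of $\Delta[1]\times\Delta[m]$ shows (using Lemma~\ref{lem:xflatthroughrho}-type collapses and the shared faces of consecutive shuffles) that such a homotopy into a non-singular target is a constant homotopy. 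So the propagation is not formal; it requires the same collapse analysis you defer to the prism lemma. Moreover, in that lemma the statement that the path-by-path degeneracies ``assemble coherently into the single operator $s_a$'' --- i.e.\ that $f=f\circ(\delta_a\sigma_a\times 1)$ on all of $\Delta[n]\times\Delta[m]$, not merely on the sub-prism where the collapse happens --- is exactly the hard content (the paper's Lemma~\ref{lem:PhiPsisigmak} spends a page of operator identities on the case $m=1$ alone), and your proposal asserts rather than proves it. In short: the skeleton is viable, but the two steps you treat as routine are, respectively, false as stated and the crux of the theorem.
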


It follows that $X \mapsto X^K$ restricts to an endofunctor of $\nsSet$.
This implies the following generalization of the aforementioned fact.

\begin{proposition} \label{prop:XKprescolim}
Let $K$ be any non-singular simplicial set.  The endofunctor $X \mapsto
X \times K$ of $\nsSet$ preserves all colimits.
\end{proposition}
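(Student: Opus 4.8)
The plan is to realize $X \mapsto X \times K$ as a \emph{left adjoint} endofunctor of $\nsSet$, since any left adjoint automatically preserves all colimits. The candidate right adjoint is the exponential $Y \mapsto Y^K$, and the whole point of Theorem~\ref{thm:XKnonsing} is that this does restrict to an endofunctor of $\nsSet$: when $Y$ is non-singular, so is $Y^K$, for every simplicial set $K$. Thus both functors are available on $\nsSet$, and the remaining task is to upgrade the ordinary exponential adjunction in $\sSet$ to an adjunction between these restricted functors.

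First I would verify that the product $X \times K$ formed in $\nsSet$ agrees with the product formed in $\sSet$, so that the ordinary exponential law can be brought to bear. Since $U \: \nsSet \to \sSet$ is a fully faithful right adjoint (to $D$), it preserves the limits that $\nsSet$ possesses; in particular $U(X \times K) \cong UX \times UK$. Hence the $\sSet$-product of two non-singular simplicial sets is itself non-singular and represents the categorical product in $\nsSet$. This is precisely the step where the hypothesis that $K$ is non-singular is needed: it guarantees that $X \times K$ stays inside $\nsSet$, so that $X \mapsto X \times K$ is genuinely an endofunctor of $\nsSet$.

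With both endofunctors in hand, I would assemble the adjunction by restricting the standard bijection $\sSet(X \times K, Y) \cong \sSet(X, Y^K)$ along the full inclusion $U$. For $X$, $K$ and $Y$ all non-singular, every object occurring in these hom-sets is non-singular — the product $X \times K$ by the previous paragraph and the exponential $Y^K$ by Theorem~\ref{thm:XKnonsing} — so full faithfulness of $U$ yields the equalities $\nsSet(X \times K, Y) = \sSet(X \times K, Y)$ and $\nsSet(X, Y^K) = \sSet(X, Y^K)$. The resulting chain
$$
\nsSet(X \times K, Y) \cong \sSet(X \times K, Y) \cong \sSet(X, Y^K) \cong \nsSet(X, Y^K)
$$
is natural in $X$ and $Y$, exhibiting $- \times K \dashv (-)^K$ as an adjoint pair of endofunctors of $\nsSet$. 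Being a left adjoint, $X \mapsto X \times K$ then preserves all colimits of $\nsSet$.

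Since Theorem~\ref{thm:XKnonsing} is already available, the proposition is essentially a matter of organization rather than of new content, and I do not expect a deep obstacle. The one point demanding care is the bookkeeping of the second and third paragraphs: one must confirm that the product is computed identically in $\nsSet$ and in $\sSet$, and that the naturality of the exponential bijection survives restriction to the full subcategory. Neither is difficult, but both are essential — without the coincidence of products the exponential law could not be invoked, and without naturality the restricted bijection would not constitute an adjunction.
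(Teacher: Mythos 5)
Your proof is correct and is essentially the paper's own argument: both restrict the exponential adjunction $\sSet(X \times K, Y) \cong \sSet(X, Y^K)$ to $\nsSet$ using Theorem~\ref{thm:XKnonsing}, exhibiting $X \mapsto X \times K$ as a left adjoint of $Y \mapsto Y^K$ and concluding that it preserves all colimits. The extra bookkeeping you include (that the product in $\nsSet$ coincides with the $\sSet$-product, and that naturality survives restriction along the full inclusion $U$) is a sound elaboration of what the paper leaves implicit.
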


The proof of Theorem~\ref{thm:XKnonsing} follows easily from the
following special case, which also directly implies the case~$K =
\Delta[1]$ of Proposition~\ref{prop:XKprescolim}.

\begin{proposition} \label{prop:XD1nonsing}
If $X$ is non-singular, then so is $X^{\Delta[1]}$.
\end{proposition}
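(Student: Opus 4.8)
The plan is to reduce the non-singularity of $X^{\Delta[1]}$ to a statement about the \emph{vertical edges} of a prism. Write $P = \Delta[n] \times \Delta[1]$, and for an $n$-simplex $f \colon P \to X$ of $X^{\Delta[1]}$ let $v_i = f|_{\{i\} \times \Delta[1]} \in X_1$ be the edge obtained by restricting to the $i$-th vertical, for $0 \le i \le n$. Unravelling the Yoneda correspondence, the representing map $\bar f \colon \Delta[n] \to X^{\Delta[1]}$ sends $\alpha \colon [m] \to [n]$ to $f \circ (\alpha_* \times \mathrm{id})$, and restricting this along a vertex $k \in [m]$ recovers $v_{\alpha(k)}$. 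Hence if $\bar f(\alpha) = \bar f(\beta)$ with $\alpha \ne \beta$, choosing $k$ with $\alpha(k) \ne \beta(k)$ yields $v_{\alpha(k)} = v_{\beta(k)}$ with distinct indices; conversely a coincidence $v_a = v_b$ with $a \ne b$ already shows $\bar f$ is not injective on vertices. Thus $\bar f$ is a monomorphism if and only if $v_0, \dots, v_n$ are pairwise distinct, and Proposition~\ref{prop:XD1nonsing} reduces to the contrapositive claim: if $v_a = v_b$ for some $a < b$, then $f$ is degenerate in $X^{\Delta[1]}$.

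To exploit $v_a = v_b$, first note it forces the endpoints to agree: writing $A_i = f(i,0)$ and $B_i = f(i,1)$, we get $A_a = A_b$ and $B_a = B_b$. The bottom face $f|_{\Delta[n] \times \{0\}}$ is an $n$-simplex of $X$ with vertices $A_0, \dots, A_n$; applying the Eilenberg--Zilber decomposition together with non-singularity of $X$ (so that the non-degenerate part has a monomorphic representing map, hence distinct vertices), the equality $A_a = A_b$ propagates to $A_a = A_{a+1}$, and the top face gives $B_a = B_{a+1}$ in the same way. Now consider the two non-degenerate triangles of $P$ spanned by $(a,0),(a{+}1,0),(a{+}1,1)$ and by $(a,0),(a,1),(a{+}1,1)$. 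Each carries a repeated vertex under $f$ (namely $A_a = A_{a+1}$, respectively $B_a = B_{a+1}$), so by non-singularity of $X$ its image is degenerate; reading off the face along the common diagonal $(a,0) \to (a{+}1,1)$ from each side identifies that diagonal edge with $v_{a+1}$ and with $v_a$ respectively. Therefore $v_a = v_{a+1}$, and I may assume the coincidence occurs at consecutive indices.

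It remains to upgrade the consecutive coincidence $v_a = v_{a+1}$ to global degeneracy, by showing $f = s_a d_a f$; equivalently, that $f$ factors through the map $\Delta[n] \times \Delta[1] \to \Delta[n-1] \times \Delta[1]$ collapsing the edge between the $a$-th and $(a{+}1)$-st vertices. This is the main obstacle. I would verify the factorization by evaluating both maps on every non-degenerate simplex of $P$, organised according to how its vertices meet the two columns $\{a\} \times \Delta[1]$ and $\{a{+}1\} \times \Delta[1]$. Since a chain in $[n] \times [1]$ cannot contain both $(a,1)$ and $(a{+}1,0)$, a simplex meeting both columns either contains an adjacent pair at the same level --- whence $A_a = A_{a+1}$ or $B_a = B_{a+1}$ makes its $f$-image degenerate, compatibly with the collapse --- or meets the two columns only in the diagonal $(a,0) \to (a{+}1,1)$, which maps to $v_a$ by the previous paragraph. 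In every case $f$ agrees with the collapsed map, so $f$ lies in the image of $s_a$ and is degenerate.

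This establishes the contrapositive, hence that $f$ non-degenerate implies $v_0, \dots, v_n$ distinct, hence $\bar f$ monic, so $X^{\Delta[1]}$ is non-singular. The delicate point is the final step: one must check that the local identification of columns $a$ and $a{+}1$ is consistent across all simplices of the prism simultaneously, and this is exactly where both the combinatorics of the prism (no chain meets both $(a,1)$ and $(a{+}1,0)$) and the non-singularity of $X$ enter in an essential way.
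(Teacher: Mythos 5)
Your first two steps are correct and coincide with the paper's own route: the reduction to showing that a coincidence of vertical edges $v_a = v_b$ forces degeneracy is the paper's vertex criterion, and the propagation to a consecutive coincidence $v_a = v_{a+1}$ via the two triangles of the square over columns $a$, $a+1$ is exactly the paper's Lemma~\ref{lem:Phiepskepsl}, resting on Lemma~\ref{lem:xflatthroughrho}. The gap is in your third step, the one you yourself flag as the main obstacle. Your plan is to verify $f = (d_a f)(\sigma_a \times 1)$ by checking agreement on every non-degenerate simplex of the prism, but the case analysis is not exhaustive and one of its cases is argued invalidly. First, chains meeting column $a$ but not column $a+1$ (for instance $(0,0) < (a,0)$) are omitted: the collapse moves these to genuinely different non-degenerate chains (here $(0,0) < (a+1,0)$), and the required equality of $f$-values is not a consequence of $A_a = A_{a+1}$ alone; it needs its own argument (e.g.\ apply Lemma~\ref{lem:xflatthroughrho} to the image of $(0,0) < (a,0) < (a+1,0)$ and take faces). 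Second, and more seriously, in the ``diagonal only'' case you justify agreement by saying the diagonal edge maps to $v_a$; but a simplex of $X$ is not determined by its edges or vertices, even when $X$ is non-singular (two distinct non-degenerate $2$-simplices can share their entire boundary, and gluing two triangles along their common boundary gives a non-singular example). So for a chain such as $(0,0) < (a,0) < (a+1,1)$, knowing where the diagonal edge goes does not show that $f$ agrees with the collapsed map on that $2$-simplex.

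Both defects are repaired at once by an observation the paper makes and you never invoke: it suffices to verify the factorization on the $n+1$ maximal chains $\gamma^{n+1}_j$, since they generate the prism, so agreement on each $\gamma^{n+1}_j$ propagates to every simplex $\gamma^{n+1}_j \alpha$ by naturality. Every maximal chain contains an adjacent same-level pair in columns $a$, $a+1$ --- the pair $(a,1),(a+1,1)$ at positions $a+1, a+2$ when $j \le a$, and $(a,0),(a+1,0)$ at positions $a, a+1$ when $j > a$ --- so your ``adjacent pair'' case, made precise by Lemma~\ref{lem:xflatthroughrho} (the image is degenerate \emph{precisely along those two positions}, not merely degenerate somewhere), applies to each generator, and the troublesome single-column and diagonal cases never arise. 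With that reorganization your argument becomes the paper's proof of Lemma~\ref{lem:PhiPsisigmak} (your choice $\Psi = d_a f$ even spares you the paper's separate well-definedness check for $\Psi$); as written, however, the proposal has a genuine hole exactly where you anticipated one.
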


We may restate Theorem~\ref{thm:XKnonsing} by saying that the
non-singular simplicial sets form an \emph{exponential ideal} in the
cartesian closed category~\cite{ML98}*{\S IV.6} of simplicial sets.
The adjunction $(D, U)$ exhibits $\nsSet$ as a \emph{reflective} full
subcategory~\cite{ML98}*{\S IV.3} of $\sSet$, which is \emph{closed
under exponentiation} in the sense of~\cite{Day72}.  In this situation,
Day's reflection theorem~\cite{Day72}*{Thm.~1.2, Cor.~2.1} shows that
the reflector $D \: \sSet \to \nsSet$ preserves finite products, making
$\nsSet$ a cartesian closed category.

\begin{proposition} \label{prop:Dfinprod}
Desingularization $D \: \sSet \to \nsSet$ preserves finite products.
\end{proposition}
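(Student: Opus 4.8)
The plan is to obtain this as a formal consequence of Day's reflection theorem, for which I would first assemble the required hypotheses. The category $\sSet$ is cartesian closed, with internal hom the simplicial mapping set $X^{K}$, and the adjunction $(D, U)$ exhibits $\nsSet$ as a full reflective subcategory, with reflector $D$ and inclusion $U$. Because $U$ is a right adjoint it preserves limits; in particular the product of two non-singular simplicial sets formed in $\nsSet$ agrees with their product formed in $\sSet$. Hence the cartesian monoidal structure of $\sSet$ restricts to $\nsSet$, and $U$ is a strong symmetric monoidal inclusion. This places us exactly in the setting of a reflective full subcategory of a symmetric monoidal closed category to which Day's theorem applies.

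The one substantive hypothesis to verify is that $\nsSet$ is an \emph{exponential ideal}, i.e., closed under exponentiation by arbitrary objects of $\sSet$: for every non-singular $X$ and every simplicial set $K$, the exponential $(UX)^{K}$ should again be non-singular, hence lie in the image of $U$. But this is precisely the content of Theorem~\ref{thm:XKnonsing}. Thus the internal hom $[K, UX]$ lands in $\nsSet$ for all $K \in \sSet$ and all $X \in \nsSet$, which is the closure-under-exponentiation condition in the sense of~\cite{Day72}.

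With these hypotheses in place, Day's reflection theorem~\cite{Day72}*{Thm.~1.2, Cor.~2.1} yields that the reflector $D$ is strong symmetric monoidal for the cartesian structures. Concretely, the canonical comparison morphism $D(X \times Y) \to DX \times DY$ --- the map of non-singular simplicial sets adjoint to $\eta_X \times \eta_Y \: X \times Y \to UDX \times UDY$, using that $U$ preserves products --- is an isomorphism, natural in $X$ and $Y$, and likewise $D\Delta[0] \to \Delta[0]$ is an isomorphism, since $\Delta[0]$ is already non-singular. As the terminal object is the empty product, this says exactly that $D$ preserves all finite products.

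The genuine mathematical content has already been carried out in Theorem~\ref{thm:XKnonsing}, so what remains is purely formal and I expect no serious obstacle. The one point to get right is the bookkeeping: confirming that the product and internal hom appearing in Day's theorem are the honest cartesian closed structure of $\sSet$ restricted along $U$, so that Day's notion of an exponential ideal coincides with the conclusion of Theorem~\ref{thm:XKnonsing}, and that the comparison morphism whose invertibility Day asserts is indeed the canonical product-comparison map for the left adjoint $D$.
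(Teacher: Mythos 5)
Your proposal is correct, and it is precisely the formal argument that the paper itself sketches in its introduction, where Day's theorem is credited with this conclusion: the hypotheses you assemble --- $\sSet$ cartesian closed, $\nsSet$ reflective with products computed in $\sSet$, and closure under exponentiation supplied by Theorem~\ref{thm:XKnonsing} --- are exactly what is needed, and the bookkeeping point you flag is settled by the fact that a reflective full subcategory is closed under limits, so Day's induced tensor $D(UX \times UY)$ on $\nsSet$ coincides with the categorical product. However, the paper's official proof of the proposition in Section~\ref{sec:outstanding} takes a different, self-contained route that never invokes Day's theorem. It studies the canonical comparison $a \: D(X \times Y) \to DX \times DY$ directly and shows it is an isomorphism in two steps: first, $a$ is degreewise surjective, because $a \, \eta_{X \times Y} = \eta_X \times \eta_Y$ and the desingularization units are degreewise surjective --- a special feature of $\nsSet$ that your formal argument never uses; second, $a$ admits a left inverse $c$, constructed by transposing $\eta_{X \times Y}$ to a map $X \to D(X \times Y)^Y$, factoring this through $\eta_X$ because $D(X \times Y)^Y$ is non-singular by Theorem~\ref{thm:XKnonsing}, untransposing to obtain $b \: DX \times Y \to D(X \times Y)$, and then repeating the maneuver in the other variable to obtain $c \: DX \times DY \to D(X \times Y)$; the identity $c \, a \, \eta_{X \times Y} = \eta_{X \times Y}$ then forces $ca = 1$ since $\eta_{X \times Y}$ is an epimorphism, and a degreewise surjective split monomorphism is an isomorphism. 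In effect the paper inlines the relevant fragment of Day's argument, specialized to this setting. Your route buys brevity and generality --- it works verbatim for any reflective exponential ideal in a symmetric monoidal closed category, and delivers cartesian closedness of $\nsSet$ in the same breath --- at the cost of leaning on the precise formulation of \cite{Day72}*{Thm.~1.2, Cor.~2.1}; the paper's route is elementary, verifiable within the paper, and exhibits the inverse explicitly. Both proofs rest on exactly the same mathematical content, namely Theorem~\ref{thm:XKnonsing}.
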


\begin{remark}
The category $\nsSet$ is not a \emph{topos} in the sense
of~\cite{ML98}*{\S IV.10}, because it does not admit a subobject
classifier $t \: \Delta[0] \to \Omega$.  Here $\Omega_0$ would
have to consist of precisely two elements, so $\Omega$ would be at
most $1$-dimensional, and could not classify all the subobjects of
$\Delta[2]$.  This is related to the fact that desingularization does not
in general preserve equalizers, as the example of the two maps $\Delta[0]
\rightrightarrows \Delta[2]/\delta_1\Delta[1]$ illustrates.
\end{remark}

We give the proof of Proposition~\ref{prop:XD1nonsing} in
Section~\ref{sec:rigid}, and deduce the remaining results in
Section~\ref{sec:outstanding}.

\section{Rigidity of prisms} \label{sec:rigid}

Informally, Proposition~\ref{prop:XD1nonsing} asserts that maps $\Phi \:
\Delta[n] \times \Delta[1] \to X$ from prisms to non-singular simplicial
sets are very rigid.

We recall some terminology and notation before turning to the proofs.
For each $n\ge0$ let $[n]$ denote the totally ordered set $\{0 < 1 < \dots
< n\}$. Following \cite{FP90}*{\S 4.1}, we shall refer to the functions
$\alpha \: [m] \to [n]$ such that $\alpha(i) \le \alpha(j)$ for all $i
\le j$ as \emph{operators}.  These are the objects and morphisms of the
category~$\Delta$.  For a simplicial set~$X \: [n] \mapsto X_n$ we write
$x\alpha \in X_m$ for the value of the operator~$\alpha \: [m] \to [n]$
on a simplex~$x \in X_n$.  The \emph{standard $n$-simplex} $\Delta[n]$
is the simplicial set $[m] \mapsto \Delta([m], [n])$ represented by~$[n]$.

An injective operator is said to be a \emph{face operator}, and a
surjective operator is said to be a \emph{degeneracy operator}. Special
face operators are the \emph{elementary face operators} $\delta^n_i \:
[n-1] \to [n]$ that omit the element~$i$, and the \emph{vertex operators}
$\epsilon^n_i \: [0] \to [n]$ that hit the element~$i$. Special degeneracy
operators are the \emph{elementary degeneracy operators} $\sigma^n_i \:
[n+1] \to [n]$ that send $i$ and its successor~$i+1$ to~$i$.  Usually,
we omit the superscript in the notation.

A face operator or degeneracy operator is \emph{proper} if it is not
the identity. A simplex~$x$ is a \emph{(proper) face} of a simplex~$y$
if $x = y\mu$ for a (proper) face operator~$\mu$. Analogously, $x$
is a \emph{(proper) degeneracy} of $y$ if $x = y\rho$ for a (proper)
degeneracy operator~$\rho$.  A simplex is \emph{degenerate} if it
is a proper degeneracy of some simplex. Otherwise, it is said to be
\emph{non-degenerate}.

By the Eilenberg--Zilber lemma \cite{FP90}*{Thm.~4.2.3} any simplex $x$
in a simplicial set~$X$ can be uniquely expressed as a degeneration $x =
x^\sharp x^\flat$ of a non-degenerate simplex.  We call the non-degenerate
simplex $x^\sharp$ the \emph{non-degenerate part} of~$x$, and will refer
to the degeneracy operator $x^\flat$ as the \emph{degenerate part} of $x$.
By the Yoneda lemma, the $n$-simplices $x$ of a simplicial set $X$ are
in natural bijective correspondence with the simplicial maps $\bar x \:
\Delta[n] \to X$. The map $\bar x$ is the \emph{representing map} of $x$.

\begin{lemma}
Let $x \in X_n$ be any simplex.  The representing map $\bar x \: \Delta[n]
\to X$ is degreewise injective if and only if the $n+1$ vertices $x
\epsilon_0, \dots, x \epsilon_n \in X_0$ are pairwise distinct.  \qed
\end{lemma}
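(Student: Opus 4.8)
My plan is to prove both directions by identifying the vertices of the $m$-simplex $\bar x(\alpha) = x\alpha$ among the chosen vertices $x\epsilon_0, \dots, x\epsilon_n$. The forward implication is immediate: if $\bar x$ is degreewise injective, then it is in particular injective in degree~$0$, and since $\epsilon_0, \dots, \epsilon_n$ are $n+1$ pairwise distinct $0$-simplices of $\Delta[n]$, their images $x\epsilon_0, \dots, x\epsilon_n$ must be pairwise distinct.

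The substance lies in the converse. First I would record the operator identity $\alpha \epsilon^m_j = \epsilon^n_{\alpha(j)}$, valid for any operator $\alpha \: [m] \to [n]$ and any $j \in \{0, \dots, m\}$: both of these operators $[0] \to [n]$ carry the unique element of $[0]$ to $\alpha(j)$. Applying the simplex~$x$ and using associativity of the action then gives
$$
(x\alpha)\epsilon^m_j \;=\; x(\alpha \epsilon^m_j) \;=\; x\,\epsilon^n_{\alpha(j)} ,
$$
so that the $j$-th vertex of $\bar x(\alpha)$ is exactly $x\epsilon_{\alpha(j)}$.

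With this in hand, I would assume $x\epsilon_0, \dots, x\epsilon_n$ to be pairwise distinct and take operators $\alpha, \beta \: [m] \to [n]$ with $x\alpha = x\beta$. For each~$j$ the identity above yields $x\epsilon_{\alpha(j)} = (x\alpha)\epsilon^m_j = (x\beta)\epsilon^m_j = x\epsilon_{\beta(j)}$, and distinctness forces $\alpha(j) = \beta(j)$. As $j$ is arbitrary, $\alpha = \beta$, so $\bar x$ is injective in degree~$m$; since $m$ is arbitrary, $\bar x$ is degreewise injective.

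The argument is short, and the only place that needs care is the composition convention: I must be sure that precomposing $\alpha$ with the $j$-th vertex operator of $[m]$ selects the element $\alpha(j)$, so that the contravariant action $x \mapsto x\alpha$ sends the $j$-th vertex of the domain simplex to $x\epsilon_{\alpha(j)}$. Beyond this bookkeeping I anticipate no genuine obstacle.
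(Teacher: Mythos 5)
Your proof is correct. The paper offers no argument for this lemma at all (it is stated with a qed symbol and no proof), and your verification --- using $\alpha\epsilon^m_j = \epsilon^n_{\alpha(j)}$ to identify the $j$-th vertex of $x\alpha$ as $x\epsilon_{\alpha(j)}$, so that pairwise distinct vertices force $\alpha = \beta$ whenever $x\alpha = x\beta$, while the forward direction is injectivity in degree~$0$ --- is precisely the routine argument the authors leave implicit.
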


\begin{lemma} \label{lem:xflatthroughrho}
Let $x$ be a simplex in a non-singular simplicial set~$X$, and suppose
that $x \epsilon_k = x \epsilon_l$ for some $k < l$.  Then the
degenerate part $x^\flat$ of $x$ factors uniquely through the proper
degeneracy operator $\sigma_k \cdots \sigma_{l-1}$.
\end{lemma}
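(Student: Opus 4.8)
The plan is to reduce the asserted factorization to a purely combinatorial statement about the degeneracy operator $x^\flat$, namely that it takes a constant value on the block $\{k, k+1, \dots, l\}$. The composite $\sigma_k \cdots \sigma_{l-1} \colon [n] \to [n-(l-k)]$ is the degeneracy operator that collapses $k, k+1, \dots, l$ to $k$ while remaining injective on the other elements, so its fibers are $\{k, \dots, l\}$ together with singletons. Since $\sigma_k \cdots \sigma_{l-1}$ is surjective, hence an epimorphism in $\Delta$, an order-preserving map with domain $[n]$ factors through it, necessarily uniquely, if and only if it is constant on each fiber, i.e.\ constant on $\{k, \dots, l\}$. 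Thus it suffices to prove $x^\flat(k) = x^\flat(k+1) = \dots = x^\flat(l)$.

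First I would relate the vertices of $x$ to those of its non-degenerate part. Writing $x^\sharp \in X_m$, the operator $x^\flat \epsilon_i \colon [0] \to [m]$ selects the element $x^\flat(i)$, so $x^\flat \epsilon_i = \epsilon_{x^\flat(i)}$ and hence $x\epsilon_i = x^\sharp x^\flat \epsilon_i = x^\sharp \epsilon_{x^\flat(i)}$ for every $i$. In other words, the $i$-th vertex of $x$ is the $x^\flat(i)$-th vertex of $x^\sharp$.

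Next I would invoke non-singularity. Because $x^\sharp$ is non-degenerate and $X$ is non-singular, its representing map $\overline{x^\sharp}$ is degreewise injective, so by the preceding lemma the vertices $x^\sharp \epsilon_0, \dots, x^\sharp \epsilon_m$ are pairwise distinct. The hypothesis $x\epsilon_k = x\epsilon_l$ then reads $x^\sharp \epsilon_{x^\flat(k)} = x^\sharp \epsilon_{x^\flat(l)}$, which forces $x^\flat(k) = x^\flat(l)$. Since $x^\flat$ is order-preserving and $k \le k+1 \le \dots \le l$, this equality squeezes all the intermediate values together, giving $x^\flat(k) = \dots = x^\flat(l)$ and completing the argument.

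I do not anticipate a serious obstacle: the whole content is the translation between the vertex equality $x\epsilon_k = x\epsilon_l$ and the combinatorics of $x^\flat$, and the only place a hypothesis is genuinely used is the appeal to non-singularity, via the preceding lemma, to deduce that distinct vertices of the non-degenerate simplex $x^\sharp$ have distinct images. The mild care required lies in correctly identifying the composite $\sigma_k \cdots \sigma_{l-1}$ as the block-collapsing degeneracy and in checking that constancy on that block is exactly the universal property needed for the (unique) factorization.
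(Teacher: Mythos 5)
Your proof is correct and follows essentially the same route as the paper: non-singularity makes the vertices of $x^\sharp$ pairwise distinct, forcing $x^\flat(k) = x^\flat(l)$, then order-preservation gives constancy on the whole block $\{k,\dots,l\}$, which yields the (automatically unique) factorization through $\sigma_k\cdots\sigma_{l-1}$. The only cosmetic difference is that the paper exhibits the factor explicitly as $x^\flat\mu$ for a section $\mu$ of $\rho = \sigma_k\cdots\sigma_{l-1}$, whereas you invoke the general criterion that a map factors through a surjection exactly when it is constant on its fibers.
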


\begin{proof}
The representing map of the non-degenerate part $x^\sharp$ is degreewise
injective, since $X$ is non-singular, so its vertices are pairwise
distinct.  It follows that $x^\flat(k) = x^\flat(l)$.  Since $x^\flat$
is order-preserving, it also follows that $x^\flat(k) = x^\flat(j)$
for all $k \le j \le l$.  Let $\rho = \sigma_k \cdots \sigma_{l-1}$.
Then $x^\flat(i) = x^\flat(j)$ whenever $\rho(i) = \rho(j)$, and
this implies that $x^\flat  = (x^\flat \mu) \rho$, where $\mu$ is any
choice of section to~$\rho$.  Thus the asserted factorization exists.
Its uniqueness is automatic, since $\rho$ is surjective.
\end{proof}

\begin{proof}[Proof of Proposition~\ref{prop:XD1nonsing}]
Suppose that $X$ is non-singular.  We must show that each non-degenerate
$n$-simplex $\Phi$ in the simplicial mapping set $X^{\Delta[1]}$ has
$n+1$ distinct vertices $\Phi \epsilon_0, \dots, \Phi \epsilon_n$.
Equivalently, we must show that if the $k$-th and $l$-th vertices of an
$n$-simplex $\Phi$ are equal, for some $0 \le k < l \le n$, then $\Phi$
is degenerate.  This follows from the two lemmas below.
\end{proof}

\begin{lemma} \label{lem:Phiepskepsl}
Suppose that $X$ is non-singular and $\Phi$ is an $n$-simplex in
$X^{\Delta[1]}$ such that $\Phi \epsilon_k = \Phi \epsilon_l$, for some
$0 \le k < l \le n$.  Then
$$
\Phi \epsilon_k = \Phi \epsilon_j = \Phi \epsilon_l
$$
for all $k \le j \le l$.
\end{lemma}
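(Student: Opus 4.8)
The plan is to translate the hypothesis, which concerns vertices of $X^{\Delta[1]}$, into statements about genuine simplices of the non-singular simplicial set $X$, where Lemma~\ref{lem:xflatthroughrho} applies. Regard $\Phi$ as a map $\Phi\colon\Delta[n]\times\Delta[1]\to X$, write $(i,s)$ for the vertex of $\Delta[n]\times\Delta[1]$ with $i\in[n]$ and $s\in[1]$, and $\Phi(i,s)\in X_0$ for its image. Under the isomorphism $\Delta[0]\times\Delta[1]\cong\Delta[1]$, the vertex $\Phi\epsilon_i$ of $X^{\Delta[1]}$ is the edge $e_i$ of $X$ that is the image of $\{i\}\times\Delta[1]$, running from $\Phi(i,0)$ to $\Phi(i,1)$. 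The hypothesis $\Phi\epsilon_k=\Phi\epsilon_l$ then reads $e_k=e_l$; in particular $\Phi(k,0)=\Phi(l,0)=:a$ and $\Phi(k,1)=\Phi(l,1)=:b$, and the goal becomes $e_j=e_k$ for all $k\le j\le l$.

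First I would fix the intermediate vertices. The image under $\Phi$ of the bottom face spanned by $(k,0)<(k+1,0)<\dots<(l,0)$ is an $(l-k)$-simplex of $X$ whose $0$th and last vertices are both $a$. By Lemma~\ref{lem:xflatthroughrho} its degenerate part factors through $\sigma_0\cdots\sigma_{l-k-1}$, which collapses the whole block to a point, so all its vertices agree and $\Phi(j,0)=a$ for every $k\le j\le l$. The symmetric argument applied to the top face $(k,1)<\dots<(l,1)$ gives $\Phi(j,1)=b$ for every $k\le j\le l$.

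Then, with the endpoints fixed, I would compare each vertical edge to a diagonal. Fix $j$ with $k\le j\le l$ and let $d_j\in X_1$ be the image under $\Phi$ of the diagonal edge $(k,0)<(j,1)$. The $2$-simplex $\tau_j$ obtained by restricting $\Phi$ to the triangle $(k,0)<(j,0)<(j,1)$ has vertices $a,a,b$, so $\tau_j\epsilon_0=\tau_j\epsilon_1$; by Lemma~\ref{lem:xflatthroughrho} it has the form $\tau_j=w\sigma_0$ for some $w\in X_1$, whence $\tau_j\delta_0=\tau_j\delta_1$ by the simplicial identities. These two faces are exactly $e_j$ and $d_j$, so $e_j=d_j$. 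Dually, the triangle $(k,0)<(k,1)<(j,1)$ has vertices $a,b,b$, so the corresponding $2$-simplex has the form $w'\sigma_1$, forcing its $\delta_1$- and $\delta_2$-faces, namely $d_j$ and $e_k$, to coincide. Combining, $\Phi\epsilon_j=e_j=d_j=e_k=\Phi\epsilon_k$, which is the assertion.

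The main obstacle will be the combinatorial bookkeeping rather than any deep input: one must correctly identify which elementary face operator on each degenerate $2$-simplex returns the vertical edge, which returns the diagonal, and which returns the collapsed bottom or top edge, and then verify via the identities $\sigma_0\delta_0=\sigma_0\delta_1=\mathrm{id}$ and $\sigma_1\delta_1=\sigma_1\delta_2=\mathrm{id}$ that the relevant pairs of faces coincide. Everything substantive is a single appeal to non-singularity through Lemma~\ref{lem:xflatthroughrho}.
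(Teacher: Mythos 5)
Your proof is correct and follows essentially the same route as the paper: both first use Lemma~\ref{lem:xflatthroughrho} to pin down the bottom vertices $\Phi(j,0)$ and top vertices $\Phi(j,1)$ for $k \le j \le l$, and then force equality of vertical edges by splitting a square into two degenerate triangles whose common diagonal must coincide with each of the two vertical edges. The only difference is bookkeeping: you compare each edge $\Phi\epsilon_j$ directly to $\Phi\epsilon_k$ via the square on columns $k$ and $j$, whereas the paper compares adjacent edges $\Phi\epsilon_j$ and $\Phi\epsilon_{j+1}$ (the square $\Phi\mu$) and chains the resulting equalities.
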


\begin{lemma} \label{lem:PhiPsisigmak}
Suppose that $X$ is non-singular and $\Phi$ is an $n$-simplex in
$X^{\Delta[1]}$ such that $\Phi \epsilon_k = \Phi \epsilon_{k+1}$, for some
$0 \le k < n$.  Then there is an $(n-1)$-simplex $\Psi$ in $X^{\Delta[1]}$
for which $\Phi = \Psi \sigma_k$, exhibiting $\Phi$ as a degenerate
simplex.
\end{lemma}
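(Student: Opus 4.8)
The plan is to prove that $\Phi$ factors through the degreewise surjection $\sigma_k \times \mathrm{id} \: \Delta[n] \times \Delta[1] \to \Delta[n-1] \times \Delta[1]$, where $\sigma_k$ now denotes the induced map of standard simplices. Such a factorization $\Phi = \Psi \circ (\sigma_k \times \mathrm{id})$ is exactly the assertion $\Phi = \Psi \sigma_k$ in $X^{\Delta[1]}$. Because $\sigma_k \times \mathrm{id}$ is surjective in each degree, the candidate $\Psi$ is forced and unique, and it is automatically simplicial provided $\Phi$ takes equal values on any two simplices with the same image under $\sigma_k \times \mathrm{id}$. So the entire problem reduces to showing that $\Phi$ is constant on the fibers of $\sigma_k \times \mathrm{id}$.

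Writing an $m$-simplex of $\Delta[n] \times \Delta[1]$ as a pair $(\alpha, \beta)$ of operators $\alpha \: [m] \to [n]$ and $\beta \: [m] \to [1]$, two such simplices lie in the same fiber exactly when their second coordinates agree and $\sigma_k \alpha = \sigma_k \alpha'$, i.e.\ $\alpha(i) = \alpha'(i)$ except where $\{\alpha(i), \alpha'(i)\} = \{k, k+1\}$. The indices $i$ with $\sigma_k \alpha(i) = k$ form a consecutive block, and monotonicity forces each lift to send a prefix of this block to $k$ and the rest to $k+1$; hence the lifts in a given fiber form a chain under single-entry moves. It therefore suffices to treat a pair $(\alpha, \beta)$ and $(\alpha', \beta)$ that agree except at one index $i_0$, where $\alpha(i_0) = k$ and $\alpha'(i_0) = k+1$.

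The key step will be to exhibit both of these as faces of a single $(m+1)$-simplex. Set $b = \beta(i_0)$ and let $w$ be the $(m+1)$-simplex obtained from $(\alpha,\beta)$ by inserting the vertex $(k+1, b)$ immediately after the vertex $(k, b)$ at index $i_0$; monotonicity of $(\alpha,\beta)$ and of $(\alpha',\beta)$ guarantees that $w$ is a genuine simplex of $\Delta[n] \times \Delta[1]$, and by construction $(\alpha,\beta) = w \delta_{i_0+1}$ while $(\alpha',\beta) = w \delta_{i_0}$. Put $W = \Phi(w) \in X_{m+1}$. Its consecutive vertices $W \epsilon_{i_0} = \Phi(k, b)$ and $W \epsilon_{i_0+1} = \Phi(k+1, b)$ coincide, since the hypothesis $\Phi \epsilon_k = \Phi \epsilon_{k+1}$ identifies the two vertical edges of the prism and hence their endpoints. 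Lemma~\ref{lem:xflatthroughrho} then applies to $W$ with the equal vertices $i_0 < i_0 + 1$, giving $W = W' \sigma_{i_0}$ for some $W' \in X_m$. As $\sigma_{i_0}\delta_{i_0} = \mathrm{id} = \sigma_{i_0}\delta_{i_0+1}$, we obtain $\Phi(\alpha,\beta) = W\delta_{i_0+1} = W' = W\delta_{i_0} = \Phi(\alpha',\beta)$.

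Granting this, $\Phi$ is constant on every fiber, the required $\Psi$ exists with $\Phi = \Psi\sigma_k$, and $\Phi$ is degenerate. I expect the main obstacle to be precisely this middle step: producing the auxiliary simplex $w$ and recognizing that $W = \Phi(w)$ has a repeated consecutive vertex, so that non-singularity of $X$, via Lemma~\ref{lem:xflatthroughrho}, collapses its two relevant faces. The surrounding reductions — from the factorization to the constant-on-fibers condition, and from arbitrary fibers to single-entry moves — are routine bookkeeping.
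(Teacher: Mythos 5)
Your proof is correct, and it takes a genuinely different route from the paper's. The paper exploits the explicit presentation of the prism by its $n+1$ maximal non-degenerate simplices $\gamma^{n+1}_j$ and the relations $\gamma^{n+1}_j\delta_{j+1}=\gamma^{n+1}_{j+1}\delta_{j+1}$: it defines $\Psi$ by explicit formulas on the generators $\gamma^n_j$ of $\Delta[n-1]\times\Delta[1]$ (namely $\Phi(\gamma^{n+1}_j)\delta_{k+1}$ for $j\le k$ and $\Phi(\gamma^{n+1}_{j+1})\delta_{k+1}$ for $j\ge k$), checks the gluing relations by two case-by-case simplicial-identity computations, and then verifies $\Phi=\Psi(\sigma_k\times 1)$ generator by generator, applying Lemma~\ref{lem:xflatthroughrho} to each top-dimensional simplex $z_j=\Phi(\gamma^{n+1}_j)$ to conclude $z_j=z_j\delta_{k+1}\sigma_{k+1}$ (resp.\ $z_j=z_j\delta_{k+1}\sigma_k$). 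You instead invoke the general principle that a simplicial map factors through a degreewise surjection precisely when it is constant on its fibers, analyze the fibers of $\sigma_k\times 1$ in every degree, reduce via the chain structure of lifts to two simplices differing in a single entry, and collapse that difference by applying Lemma~\ref{lem:xflatthroughrho} to an auxiliary simplex $W=\Phi(w)$ of one dimension higher, whose faces $W\delta_{i_0}$ and $W\delta_{i_0+1}$ are then forced to coincide. The essential mechanism is the same in both arguments (non-singularity plus a repeated vertex forces a degeneracy, which identifies two faces), but your decomposition trades the paper's generators-and-relations bookkeeping for a degreewise fiber analysis: you avoid writing formulas for $\Psi$ and checking the compatibility relations entirely, at the cost of the combinatorial observations that lifts in a fiber form a chain under single-entry moves and that the inserted-vertex simplex $w$ is order-preserving — both of which you correctly identify and justify. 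A side benefit of your argument is that it makes explicit that $\Phi$ identifies \emph{every} pair of simplices glued by $\sigma_k\times 1$, not just the top-dimensional generators.
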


We introduce some more notation before proving these lemmas.
By definition, an $n$-simplex in $X^{\Delta[1]}$ is a simplicial
map
$$
\Phi \: \Delta[n] \times \Delta[1] \longto X \,.
$$
Here, the \emph{prism} $\Delta[n] \times \Delta[1]$ is generated by the
non-degenerate $(n+1)$-simplices
$$
\gamma^{n+1}_j \: \Delta[n+1] \longto \Delta[n] \times \Delta[1] \,,
$$
for $0 \le j \le n$, given by
$$
\gamma^{n+1}_j(i) = \begin{cases}
(i,0) & \text{for $0 \le i \le j$,} \\
(i-1,1) & \text{for $j+1 \le i \le n+1$.}
\end{cases}
$$
Viewing $\Delta[n] \times \Delta[1]$ as the nerve of the partially
ordered set $[n] \times [1]$, these generators can be seen as maximal
length paths in the diagram below.
$$
\xymatrix{
(0,1) \ar[r] & \dots \ar[r]
	& (j,1) \ar[r] & (j+1, 1) \ar[r]
	& \dots \ar[r] & (n,1) \\
(0,0) \ar[r] \ar[u] & \dots \ar[r]
	& (j,0) \ar[r] \ar[u] \ar[ur] & (j+1, 0) \ar[r] \ar[u]
	& \dots \ar[r] & (n,0) \ar[u]
}
$$
In particular, they satisfy the relations
\begin{equation} \label{eq:gammadeltarel}
\gamma^{n+1}_j \delta_{j+1} = \gamma^{n+1}_{j+1} \delta_{j+1}
\end{equation}
for $0 \le j < n$.  Conversely, to specify $\Phi$ it suffices to give its
values $\Phi \gamma^{n+1}_j$ on these $n+1$ generators, subject to the $n$
relations $(\Phi \gamma^{n+1}_j) \delta_{j+1} = (\Phi \gamma^{n+1}_{j+1})
\delta_{j+1}$.

\begin{proof}[Proof of Lemma~\ref{lem:Phiepskepsl}]
Let $X$ be non-singular and let $\Phi$ be an $n$-simplex in
$X^{\Delta[1]}$ with $\Phi \epsilon_k = \Phi \epsilon_l$,
where $0 \le k < l \le n$.
The vertex operators $\epsilon_i \: \Delta[0] \to \Delta[1]$ for $i \in
\{0,1\}$ induce maps $\epsilon_i^* \: X^{\Delta[1]} \to X^{\Delta[0]}
\cong X$.  Let $x_i = \epsilon_i^* \Phi$ in $X_n$ be represented by
the composite
$$
\bar x_i \: \Delta[n] \cong \Delta[n] \times \Delta[0]
	\overset{1\times\epsilon_i}\longto \Delta[n] \times \Delta[1]
	\overset{\Phi}\longto X \,,
$$
restricting $\Phi$ to the bottom (for $i=0$) or the top (for $i=1$)
of the prism.  The hypothesis on~$\Phi$ implies that $x_i \epsilon_k
= x_i \epsilon_l$ in $X_0$, so by Lemma~\ref{lem:xflatthroughrho} we
can factor the degenerate part $x_i^\flat$ of $x_i$ through $\sigma_k
\cdots \sigma_{l-1}$, so that $x_i = y_i \sigma_k \cdots \sigma_{l-1}$
for some $(n+k-l)$-simplices $y_i$ of~$X$.

Consider any~$j$ with $k \le j < l$, let $\mu \: [1] \to [n]$ be the
face operator given by $\mu(0) = j$ and $\mu(1) = j+1$, and view the
$1$-simplex $\Phi \mu$ in $X^{\Delta[1]}$ as the map $\Delta[1] \times
\Delta[1] \to X$ indicated by the following square.
$$
\xymatrix@+2pc@!C{
x_1 \epsilon_j \ar[r]^-{x_1\mu}
		\ar@{}[dr]|(0.3){z_0}
	& x_1 \epsilon_{j+1} \\
x_0 \epsilon_j \ar[r]_-{x_0\mu} \ar[u]^-{\Phi \epsilon_j} \ar[ur]
	& x_0 \epsilon_{j+1} \ar[u]_-{\Phi \epsilon_{j+1}}
		\ar@{}[ul]|(0.3){z_1}
}
$$
The factorization of $x_i$ through $\sigma_j$ shows that $x_i \epsilon_j =
x_i \epsilon_{j+1}$, for each~$i$.  Hence each $2$-simplex $z_i$ does not
have pairwise distinct vertices, and must therefore be degenerate, since
$X$ is non-singular.  By Lemma~\ref{lem:xflatthroughrho} we must have $z_0
= w_0 \sigma_1$ and $z_1 = w_1 \sigma_0$ for some $1$-simplices $w_i$.
More precisely, we must have $w_0 = z_0 \delta_2 = \Phi \epsilon_j$
and $w_1 = z_1 \delta_0 = \Phi \epsilon_{j+1}$.

It follows that the diagonal $1$-simplex in the figure is simultaneously
equal to $z_0 \delta_1 = (\Phi \epsilon_j) \sigma_1 \delta_1 = \Phi
\epsilon_j$ and to $z_1 \delta_1 = (\Phi \epsilon_{j+1}) \sigma_0
\delta_1 = \Phi \epsilon_{j+1}$.  This proves that $\Phi \epsilon_j =
\Phi \epsilon_{j+1}$ are equal as vertices in $X^{\Delta[1]}$.
\end{proof}

\begin{proof}[Proof of Lemma~\ref{lem:PhiPsisigmak}]
Let $X$ be non-singular and let $\Phi$ be an $n$-simplex in
$X^{\Delta[1]}$ with $\Phi \epsilon_k = \Phi \epsilon_{k+1}$, where $0 \le
k < n$.  We will construct an $(n-1)$-simplex $\Psi$ in $X^{\Delta[1]}$
with $\Phi = \Psi \sigma_k$.  Equivalently, we must define $\Psi \:
\Delta[n-1] \times \Delta[1] \to X$ so as to make the right hand triangle
commute in the diagram below.
$$
\xymatrix{
\Delta[n+1] \ar[r]^-{\gamma^{n+1}_j}
	& \Delta[n] \times \Delta[1]
		\ar[r]^-{\Phi} \ar[d]_-{\sigma_k \times 1}
	& X \\
& \Delta[n-1] \times \Delta[1] \ar[ur]_-{\Psi}
}
$$
The triangle will commute if $\Phi \gamma^{n+1}_j = \Psi (\sigma_k
\times 1) \gamma^{n+1}_j$ for each $0 \le j \le n$, since the simplices
$\gamma^{n+1}_0, \dots, \gamma^{n+1}_n$ generate the prism $\Delta[n]
\times \Delta[1]$.  Here
\begin{equation} \label{eq:sigmagamma}
(\sigma_k \times 1) \gamma^{n+1}_j = \begin{cases}
\gamma^n_j \sigma_{k+1} & \text{for $0 \le j \le k$,} \\
\gamma^n_{j-1} \sigma_k & \text{for $k < j \le n$.}
\end{cases}
\end{equation}
Should $\Psi$ exist, it must therefore satisfy
$$
\Phi(\gamma^{n+1}_j) = \begin{cases}
\Psi(\gamma^n_j) \sigma_{k+1} & \text{for $0 \le j \le k$,} \\
\Psi(\gamma^n_{j-1}) \sigma_k & \text{for $k < j \le n$.}
\end{cases}
$$
Observing that $\delta_{k+1}$ is a section to both $\sigma_k$ and
$\sigma_{k+1}$, we are led to define a function
$$
\psi \: \{\gamma^n_0, \dots, \gamma^n_{n-1}\}
	\longto X_n
$$
by
$$
\psi(\gamma^n_j) = \begin{cases}
\Phi(\gamma^{n+1}_j) \delta_{k+1} & \text{for $0 \le j \le k$,} \\
\Phi(\gamma^{n+1}_{j+1}) \delta_{k+1} & \text{for $k \le j \le n-1$,}
\end{cases}
$$
which specifies where $\Psi$ must send the generators $\gamma^n_0, \dots,
\gamma^n_{n-1}$ of~$\Delta[n-1] \times \Delta[1]$, should it exist.
Note that for $j=k$ the relation
$$
\Phi(\gamma^{n+1}_k) \delta_{k+1}
= \Phi(\gamma^{n+1}_k \delta_{k+1})
= \Phi(\gamma^{n+1}_{k+1} \delta_{k+1})
= \Phi(\gamma^{n+1}_{k+1}) \delta_{k+1}
$$
holds, by~\eqref{eq:gammadeltarel}, so $\psi(\gamma^n_k)$ is unambiguously
defined.  To verify that $\Psi(\gamma^n_j) = \psi(\gamma^n_j)$ for $0 \le
j \le n-1$ defines a map $\Psi \: \Delta[n-1] \times \Delta[1] \to X$,
it is (necessary and) sufficient to confirm the relations
\begin{equation} \label{eq:psigammadelta}
\psi(\gamma^n_j) \delta_{j+1} = \psi(\gamma^n_{j+1}) \delta_{j+1}
\end{equation}
for $0 \le j < n-1$.
We separate the proof of~\eqref{eq:psigammadelta} into
two cases.

First, for $0 \le j < k$ we use the general rule $\delta_{k+1}
\delta_{j+1} = \delta_{j+1} \delta_k$ for $j < k$, together
with~\eqref{eq:gammadeltarel}, to see that
$$
\psi(\gamma^n_j) \delta_{j+1}
	= \Phi \gamma^{n+1}_j \delta_{k+1} \delta_{j+1}
	= \Phi \gamma^{n+1}_j \delta_{j+1} \delta_k
$$
is equal to
$$
\psi(\gamma^n_{j+1}) \delta_{j+1}
	= \Phi \gamma^{n+1}_{j+1} \delta_{k+1} \delta_{j+1}
	= \Phi \gamma^{n+1}_{j+1} \delta_{j+1} \delta_k \,.
$$
Second, for $k \le j < n-1$ we use the general rule $\delta_{k+1}
\delta_{j+1} = \delta_{j+2} \delta_{k+1}$ for $k \le j$, together
with~\eqref{eq:gammadeltarel}, to see that
$$
\psi(\gamma^n_j) \delta_{j+1}
	= \Phi \gamma^{n+1}_{j+1} \delta_{k+1} \delta_{j+1}
	= \Phi \gamma^{n+1}_{j+1} \delta_{j+2} \delta_{k+1}
$$
is equal to
$$
\psi(\gamma^n_{j+1}) \delta_{j+1}
	= \Phi \gamma^{n+1}_{j+2} \delta_{k+1} \delta_{j+1}
	= \Phi \gamma^{n+1}_{j+2} \delta_{j+2} \delta_{k+1} \,.
$$
This concludes the verification of~\eqref{eq:psigammadelta},
giving us a well-defined map $\Psi$.

It still remains to argue that $\Phi = \Psi (\sigma_k \times 1)$, and
this is where we use the hypotheses on~$X$ and $\Phi$.  It suffices to
check that the equation
\begin{equation} \label{eq:PhigammaPsisigmagamma}
\Phi \gamma^{n+1}_j = \Psi (\sigma_k \times 1) \gamma^{n+1}_j
\end{equation}
holds for $0 \le j \le n$.  Again, we separate the proof into two cases.

First, for $0 \le j \le k$ we must show that the $(n+1)$-simplex
$z_j = \Phi(\gamma^{n+1}_j)$ in~$X$ is equal to
$$
\Psi (\sigma_k \times 1) \gamma^{n+1}_j
	= \Psi \gamma^n_j \sigma_{k+1}
	= \Phi(\gamma^{n+1}_j) \delta_{k+1} \sigma_{k+1}
	= z_j \delta_{k+1} \sigma_{k+1} \,,
$$
where we have used the calculation~\eqref{eq:sigmagamma}.
The vertices $z_j \epsilon_{k+1}$ and $z_j \epsilon_{k+2}$ in~$X$
are equal to $\epsilon_1^*(\Phi \epsilon_k)$ and $\epsilon_1^*(\Phi
\epsilon_{k+1})$, respectively, hence are equal by the assumption
that $\Phi \epsilon_k = \Phi \epsilon_{k+1}$.  It follows by
Lemma~\ref{lem:xflatthroughrho} that $z_j = w_j \sigma_{k+1}$
for some $n$-simplex $w_j$ in~$X$, since $X$ is non-singular.
This immediately implies that $z_j \delta_{k+1} \sigma_{k+1} = w_j
\sigma_{k+1} \delta_{k+1} \sigma_{k+1} = w_j \sigma_{k+1} = z_j$, since
$\delta_{k+1}$ is a section to $\sigma_{k+1}$.

Second, for $k < j \le n$ we must show that the $(n+1)$-simplex
$z_j = \Phi(\gamma^{n+1}_j)$ in~$X$ is equal to
$$
\Psi (\sigma_k \times 1) \gamma^{n+1}_j
	= \Psi \gamma^n_{j-1} \sigma_k
	= \Phi(\gamma^{n+1}_j) \delta_{k+1} \sigma_k
	= z_j \delta_{k+1} \sigma_k \,.
$$
The vertices $z_j \epsilon_k$ and $z_j \epsilon_{k+1}$ in~$X$
are equal to $\epsilon_0^*(\Phi \epsilon_k)$ and $\epsilon_0^*(\Phi
\epsilon_{k+1})$, respectively, hence are themselves equal.  It follows
by Lemma~\ref{lem:xflatthroughrho} that $z_j = w_j \sigma_k$ for
some $n$-simplex $w_j$ in~$X$.  This implies that $z_j \delta_{k+1}
\sigma_k = w_j \sigma_k \delta_{k+1} \sigma_k = w_j \sigma_k = z_j$,
since $\delta_{k+1}$ is a section to $\sigma_k$.
This concludes our verification of~\eqref{eq:PhigammaPsisigmagamma},
proving that $\Phi$ is a degenerate simplex of $X^{\Delta[1]}$.
\end{proof}

\section{Outstanding proofs} \label{sec:outstanding}

\begin{proof}[Proof of Theorem~\ref{thm:XKnonsing}]
Let $X$ be any non-singular simplicial set. By
Proposition~\ref{prop:XD1nonsing} and induction, $X^{\Delta[1]^n}$
is non-singular, for each~$n\ge0$.  The inclusion $i \: \Delta[n] \to
\Delta[1]^n$ sending $j \in [n]$ to $(1, \dots, 1, 0, \dots, 0) \in
[1]^n$ (with $j$ copies of~$1$) admits a retraction $r \: \Delta[1]^n
\to \Delta[n]$ sending $(k_1, \dots, k_n)$ to the largest index~$j$
such that $k_j = 1$.  Hence $r^* \: X^{\Delta[n]} \to X^{\Delta[1]^n}$
is split injective, and shows that $X^{\Delta[n]}$ is non-singular.

For any simplicial set~$K$, we can find a simplicial set
$L = \coprod_\alpha \Delta[n_\alpha]$ and a degreewise
surjection $s \: L \to K$.  The induced map
$$
s^* \: X^K \longto X^L \cong \prod_\alpha X^{\Delta[n_\alpha]}
$$
is then degreewise injective, and exhibits $X^K$ as a simplicial subset
of a product of non-singular simplicial sets.  It follows that $X^K$
is non-singular.
\end{proof}

\begin{proof}[Proof of Proposition~\ref{prop:XKprescolim}]
When $X$, $K$ and $Y$ are non-singular, so that $X \times K$ and $Y^K$
are non-singular by Theorem~\ref{thm:XKnonsing}, the natural bijection
$\sSet(X \times K, Y) \cong \sSet(X, Y^K)$ restricts to a natural
bijection $\nsSet(X \times K, Y) \cong \nsSet(X, Y^K)$.  Hence the
endofunctor $X \mapsto X \times K$ of $\nsSet$ is a left adjoint, and
preserves all colimits.
\end{proof}

\begin{proof}[Proof of Proposition~\ref{prop:Dfinprod}]
Let $X$ and~$Y$ be any simplicial sets.  Recall that each adjunction unit
$\eta_Z \: Z \to DZ$ is degreewise surjective.  Let $a \: D(X \times Y)
\to DX \times DY$ be induced by the two projections
from~$X \times Y$.  The composite
$$
X \times Y \overset{\eta_{X \times Y}}\longto D(X \times Y)
	\overset{a}\longto
	DX \times DY
$$
is then equal to $\eta_X \times \eta_Y$, so $a$ is degreewise surjective.
The right adjoint $X \to D(X \times Y)^Y$ of $\eta_{X \times Y}$ factors
through $\eta_X \: X \to DX$, since $D(X \times Y)^Y$ is non-singular
by Theorem~\ref{thm:XKnonsing}.  Hence there is a unique factorization
$$
X \times Y \overset{\eta_X \times 1}\longto DX \times Y
	\overset{b}\longto D(X \times Y)
$$
of $\eta_{X \times Y}$.  Similarly, there is a unique factorization
$$
DX \times Y \overset{1 \times \eta_Y}\longto DX \times DY
	\overset{c}\longto D(X \times Y)
$$
of~$b$, again by Theorem~\ref{thm:XKnonsing}.  It follows that $c a
\eta_{X \times Y} = c (\eta_X \times \eta_Y) = \eta_{X \times Y}$,
so that $c a = 1$, which proves that $a$ is an isomorphism.
\end{proof}

\begin{bibdiv}
\begin{biblist}

\bib{Day72}{article}{
   author={Day, Brian},
   title={A reflection theorem for closed categories},
   journal={J. Pure Appl. Algebra},
   volume={2},
   date={1972},
   number={1},
   pages={1--11},
   issn={0022-4049},
   review={\MR{296126}},
   doi={10.1016/0022-4049(72)90021-7},
}

\bib{Fje}{article}{
   author={Fjellbo, Vegard},
   title={Homotopy theory of non-singular simplicial sets},
   note={arXiv:2001.05032},
}

\bib{FP90}{book}{
   author={Fritsch, Rudolf},
   author={Piccinini, Renzo A.},
   title={Cellular structures in topology},
   series={Cambridge Studies in Advanced Mathematics},
   volume={19},
   publisher={Cambridge University Press, Cambridge},
   date={1990},
   pages={xii+326},
   isbn={0-521-32784-9},
   review={\MR{1074175}},
   doi={10.1017/CBO9780511983948},
}

\bib{Kan57}{article}{
   author={Kan, Daniel M.},
   title={On c. s. s. complexes},
   journal={Amer. J. Math.},
   volume={79},
   date={1957},
   pages={449--476},
   issn={0002-9327},
   review={\MR{90047}},
   doi={10.2307/2372558},
}

\bib{ML98}{book}{
   author={Mac Lane, Saunders},
   title={Categories for the working mathematician},
   series={Graduate Texts in Mathematics},
   volume={5},
   edition={2},
   publisher={Springer-Verlag, New York},
   date={1998},
   pages={xii+314},
   isbn={0-387-98403-8},
   review={\MR{1712872}},
}

\bib{WJR13}{book}{
   author={Waldhausen, Friedhelm},
   author={Jahren, Bj\o rn},
   author={Rognes, John},
   title={Spaces of PL manifolds and categories of simple maps},
   series={Annals of Mathematics Studies},
   volume={186},
   publisher={Princeton University Press, Princeton, NJ},
   date={2013},
   pages={vi+184},
   isbn={978-0-691-15776-4},
   review={\MR{3202834}},
   doi={10.1515/9781400846528},
}

\end{biblist}
\end{bibdiv}

\end{document}